\newtheorem {thm}{Theorem}
\newtheorem {cor}{Corollary}
\newtheorem {lem}{Lemma}
\newtheorem {defn}{Definition}
\begin{document}

\title*{A Multidimensional Hilbert-Type Integral Inequality Related to the Riemann Zeta Function}
\author{Michael Th. Rassias$^{1}$ and Bicheng Yang$^{2}$ 
}
\institute{Michael Th. Rassias$^{1}$ \at Department
of Mathematics, ETH-Zentrum, CH-8092 Zurich, Switzerland, \email{michail.rassias@math.ethz.ch}
\and Bicheng Yang$^{2}$ \at Department of Mathematics, Guangdong
University of, \and Education, Guangzhou, Guangdong 510303, P. R. China \email{bcyang@gdei.edu.cn}}
%
%
\maketitle

\abstract{\,\,In this chapter, using methods of weight functions and
techniques of real analysis, we provide a multidimensional
Hilbert-type integral inequality with a homogeneous kernel of
degree 0 as well as a best possible constant factor related to the
Riemann zeta function. Some equivalent representations and certain
reverses are obtained. Furthermore, we also consider operator
expressions with the norm and some particular results. }

\textbf{Key words} {\,\,Hilbert-type integral inequality; Hilbert-type integral operator; Riemann zeta function; Gamma function; weight function;}\\
\textbf{2000 Mathematics Subject Classification}{ \,\,11YXX, 26D15, 47A07, 37A10, 65B10 }

\section{Introduction}
If $p>1,\frac{1}{p}+\frac{1}{q}=1,f(x),g(y)\geq 0,f\in L^{p}(\mathbf{R}%
_{+}),g\in L^{q}(\mathbf {R}_{+}),$ $$||f||_{p}=\left\{\int_{0}^{\infty }f^{p}(x)dx\right\}^{%
\frac{1}{p}}>0,$$
$||g||_{q}>0,$ then we have the following Hardy-Hilbert's
integral inequality (cf. \cite{HLP}):
\begin{equation}
\int_{0}^{\infty }\int_{0}^{\infty }\frac{f(x)g(y)}{x+y}dxdy<\frac{\pi }{%
\sin (\pi /p)}||f||_{p}||g||_{q},  \label{1}
\end{equation}%
where the constant factor $\frac{\pi }{\sin (\pi /p)}$ is the best possible.
If $a_{m},b_{n}\geq 0,a=\{a_{m}\}_{m=1}^{\infty }\in
l^{p},b=\{b_{n}\}_{n=1}^{\infty }\in l^{q},$ $||a||_{p}=\{\sum_{m=1}^{\infty
}a_{m}^{p}\}^{\frac{1}{p}}>0,||b||_{q}>0,$ then we still have the following
discrete variant of the above inequality with the same best constant $\frac{%
\pi }{\sin (\pi /p)},$ that is%
\begin{equation}
\sum_{m=1}^{\infty }\sum_{n=1}^{\infty }\frac{a_{m}b_{n}}{m+n}<\frac{\pi }{%
\sin (\pi /p)}||a||_{p}||b||_{q}.  \label{2}
\end{equation}%
Inequalities (\ref{1}) and (\ref{2}) are important in mathematical
analysis and its applications (cf. \cite{HLP}, \cite{MPF},
\cite{BAC}, \cite{Y1}, \cite{Y2}, \cite{Y4}, \cite{YB7}).

In 1998, by introducing an independent parameter $\lambda \in
(0,1]$, Yang \cite{Y3} presented an extension of (\ref{1}) for
$p=q=2$. In 2009 and 2011, Yang \cite{Y1}, \cite{Y2} provided some
extensions of (\ref{1}) and (\ref{2}) as follows: If $\lambda
_{1},\lambda _{2},\lambda \in \mathbf{R},\lambda _{1} + \lambda
_{2}=\lambda ,k_{\lambda }(x,y)$ is a non-negative homogeneous
function of degree $-\lambda ,$ with%
 $$
k(\lambda _{1})=\int_{0}^{\infty }k_{\lambda }(t,1)t^{\lambda
_{1}-1}dt\in \mathbf {R}_{+},
 $$
$\phi (x)=x^{p(1-\lambda _{1})-1},\psi (y)=y^{q(1-\lambda
_{2})-1},f(x),g(y)\geq 0,$
 $$
f\in L_{p,\phi }(\mathbf {R}_{+})=\left\{ f:||f||_{p,\phi
}:=\left\{\int_{0}^{\infty }\phi
(x)|f(x)|^{p}dx\right\}^{\frac{1}{p}}<\infty \right\} ,
 $$
$g\in L_{q,\psi }(\mathbf{R}_{+}),||f||_{p,\phi },||g||_{q,\psi
}>0,$ then we have
\begin{equation}
\int_{0}^{\infty }\int_{0}^{\infty }k_{\lambda }(x,y)f(x)g(y)dxdy<k(\lambda
_{1})||f||_{p,\phi }||g||_{q,\psi },  \label{3}
\end{equation}%
where the constant factor $k(\lambda _{1})$ is the best possible. Moreover,
if $k_{\lambda }(x,y)$ is finite and $k_{\lambda }(x,y)x^{\lambda
_{1}-1}(k_{\lambda }(x,y)y^{\lambda _{2}-1})$ is decreasing with respect to $%
x>0\: (y>0),$ then for $a_{m,}b_{n}\geq 0,$%
 $$
a\in l_{p,\phi }=\left\{ a:||a||_{p,\phi
}:=\left\{\sum_{n=1}^{\infty }\phi
(n)|a_{n}|^{p}\right\}^{\frac{1}{p}}<\infty \right\} ,
 $$
$b=\{b_{n}\}_{n=1}^{\infty }\in l_{q,\psi },$ $||a||_{p,\phi },||b||_{q,\psi
}>0,$ we have
\begin{equation}
\sum_{m=1}^{\infty }\sum_{n=1}^{\infty }k_{\lambda
}(m,n)a_{m}b_{n}<k(\lambda _{1})||a||_{p,\phi }||b||_{q,\psi },  \label{4}
\end{equation}%
where, the constant factor $k(\lambda _{1})$ is still the best possible.

Clearly, for $\lambda =1,k_{1}(x,y)=\frac{1}{x+y},$ $\lambda _{1}=\frac{1}{q}%
,\lambda _{2}=\frac{1}{p},$ (\ref{3}) reduces to (\ref{1}), while (\ref{4})
reduces to (\ref{2}). Some further results including a few multidimensional
Hilbert-type integral inequalities are provided in \cite{YMP}-\cite{LH}.

In this chapter, using methods of weight functions and techniques
of real analysis, we present a new multidimensional Hilbert-type integral
inequality with a homogeneous kernel of degree 0 as well as a best possible
constant factor related to the Riemann zeta function and the Gamma function, which is an extension of
the double case as follows:%
\begin{equation}
\int_{0}^{\infty }\int_{0}^{\infty }\left( \coth (\frac{x}{y})-1\right)
f(x)g(y)dxdy<\frac{\Gamma (\sigma )}{2^{\sigma -1}}\zeta (\sigma
)||f||_{p,\varphi }||g||_{q,\psi },  \label{5}
\end{equation}%
where, $\zeta (\cdot )$ is the Riemann zeta function and $\Gamma(\cdot)$ is the Gamma function (cf. \cite{R3}, \cite{R2}). Some equivalent forms and reverses are obtained. Furthermore, we also consider the
operator expressions with the norm and certain particular results. For a number of fundamental properties of the Riemann zeta function and the Gamma function, especially in Analytic Number Theory and related subjects, the reader is referred to \cite{R1}-\cite{R8}, \cite{RR}.

\section{Some Lemmas}
If $m,n\in \mathbf{N(N}$ is the set of positive integers), $\alpha
,\beta >0,$ we define%
 $$
||x||_{\alpha }:=\left( \sum_{k=1}^{m}|x_{k}|^{\alpha }\right) ^{\frac{1%
}{\alpha }}(x=(x_{1},\cdots ,x_{m})\in \mathbf{R}^{m}),
 $$
 $$
||y||_{\beta }:=\left( \sum_{k=1}^{n}|y_{k}|^{\beta }\right) ^{\frac{1}{%
\beta }}(y=(y_{1},\cdots ,y_{n})\in \mathbf{R}^{n}).
 $$
\begin{lem}
 If $s\in \mathbf{N},\gamma ,M>0,\Psi (u)$ is a
non-negative measurable function defined in $(0,1],$ and
 $$
D_{M}^{s}:=\left\{ x\in \mathbf{R}_{+}^{s}:0<u=\sum_{i=1}^{s}\left( \frac{x_{i}}{%
M}\right) ^{\gamma }\leq 1\right\} ,
 $$
then we have (cf. \cite{YB7})%
\begin{equation}
\int \cdots \int_{D_{M}^{s}}\Psi \left( \sum_{i=1}^{s}\left( \frac{x_{i}}{M}%
\right) ^{\gamma }\right) dx_{1}\cdots dx_{s}=\frac{M^{s}\Gamma ^{s}(\frac{1}{\gamma })}{\gamma ^{s}\Gamma (\frac{s}{%
\gamma })}\int_{0}^{1}\Psi (u)u^{\frac{s}{\gamma }-1}du.  \label{6}
\end{equation}
\end{lem}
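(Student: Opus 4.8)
The plan is to reduce the integral over the curved region $D_M^s$ to a standard Dirichlet-type integral over the simplex and then to establish the latter by induction on $s$. First I would perform the substitution $t_i=(x_i/M)^\gamma$ for $i=1,\dots,s$, so that $x_i=Mt_i^{1/\gamma}$ and $dx_i=\frac{M}{\gamma}t_i^{1/\gamma-1}\,dt_i$. Writing $c:=1/\gamma$, the region $D_M^s$ becomes the open simplex $\Delta_s:=\{t\in\mathbf{R}_+^s:\sum_{i=1}^s t_i\le 1\}$, and the left-hand side of (\ref{6}) transforms into $\frac{M^s}{\gamma^s}\int_{\Delta_s}\Psi\bigl(\sum_{i=1}^s t_i\bigr)\prod_{i=1}^s t_i^{c-1}\,dt$. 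Since $\Gamma(1/\gamma)^s/\Gamma(s/\gamma)=\Gamma(c)^s/\Gamma(sc)$, it then suffices to prove the identity
\[
I_s:=\int_{\Delta_s}\Psi\Bigl(\sum_{i=1}^s t_i\Bigr)\prod_{i=1}^s t_i^{c-1}\,dt=\frac{\Gamma(c)^s}{\Gamma(sc)}\int_0^1\Psi(u)u^{sc-1}\,du.
\]

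Next I would prove this identity by induction on $s$. The base case $s=1$ is immediate, since both sides equal $\int_0^1\Psi(t)t^{c-1}\,dt$. For the inductive step I would single out the last variable, fix $t_{s+1}=\tau\in(0,1)$, and rescale the remaining variables via $t_i=(1-\tau)r_i$ so that they range over $\Delta_s$; this produces a factor $(1-\tau)^{sc}$ and, after applying the inductive hypothesis to the function $w\mapsto\Psi(\tau+(1-\tau)w)$, leaves a single integral in $w$. Changing variables to $u=\tau+(1-\tau)w$ and interchanging the order of integration over the triangle $0<\tau<u<1$ reduces the inner $\tau$-integral to the Beta integral $\int_0^u\tau^{c-1}(u-\tau)^{sc-1}\,d\tau=u^{(s+1)c-1}\,\Gamma(c)\Gamma(sc)/\Gamma((s+1)c)$.

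Assembling these factors, the constant collapses to $\Gamma(c)^{s+1}/\Gamma((s+1)c)$, which is exactly the claimed form with $s$ replaced by $s+1$, completing the induction and hence the proof after restoring the prefactor $M^s/\gamma^s$. The main obstacle is the bookkeeping in the inductive step: one must track carefully the powers of $(1-\tau)$ generated both by the rescaling and by the Jacobian of $u=\tau+(1-\tau)w$ so that they cancel, and then justify the interchange of the order of integration. This interchange is legitimate because $\Psi\ge 0$ is measurable, so Tonelli's theorem applies throughout and no integrability hypothesis beyond non-negativity is needed. The only genuinely analytic ingredient is the Beta--Gamma relation $B(c,sc)=\Gamma(c)\Gamma(sc)/\Gamma((s+1)c)$, which telescopes the Gamma factors across the induction.
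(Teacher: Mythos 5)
Your proof is correct and complete. Note, however, that the paper itself offers no proof of this lemma at all: it is quoted as a known formula with a citation to reference \cite{YB7} (Yang's earlier work on Hilbert-type integral operators), so there is no internal argument to compare yours against. What you have supplied is the classical Dirichlet--Liouville reduction: the power substitution $t_i=(x_i/M)^{\gamma}$ flattens $D_M^s$ to the simplex and extracts the factor $M^s/\gamma^s$, and the simplex identity
\[
\int_{\Delta_s}\Psi\Bigl(\sum_{i=1}^s t_i\Bigr)\prod_{i=1}^s t_i^{c-1}\,dt
=\frac{\Gamma^{s}(c)}{\Gamma(sc)}\int_0^1\Psi(u)\,u^{sc-1}\,du,\qquad c=\tfrac1\gamma,
\]
is then established by induction on $s$ via the Beta--Gamma relation. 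All the delicate points are handled: the inductive hypothesis is legitimately applied to the non-negative measurable function $w\mapsto\Psi(\tau+(1-\tau)w)$, the powers of $(1-\tau)$ coming from the rescaling and from the Jacobian of $u=\tau+(1-\tau)w$ do cancel exactly (the factor $(1-\tau)^{sc}$ against $(1-\tau)^{-(sc-1)}\cdot(1-\tau)^{-1}$), and the interchange of the $\tau$- and $u$-integrations over the triangle $0<\tau<u<1$ is justified by Tonelli's theorem since the integrand is non-negative, so no integrability assumption on $\Psi$ beyond measurability is needed. This matches the standard proof of such Liouville-type formulas found in the cited literature, so your argument is essentially the canonical one, with the added value of making the paper self-contained at this point.
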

\begin{lem}[See \cite{YM}]
  If $s\in \mathbf{N,}\gamma >0,$ and $\varepsilon \geq0,$ then
\begin{equation}
\int \cdots \int_{\{x\in \mathbf{R}_{+}^{s}:||x||_{\gamma }\geq
1\}}||x||_{\gamma }^{-s-\varepsilon }dx_{1}\cdots dx_{s}=
\left\{\begin{array}{ll}
\frac{\Gamma ^{s}(%
\frac{1}{\gamma })}{\varepsilon \gamma ^{s-1}\Gamma
(\frac{s}{\gamma })}& \varepsilon>0\\
\infty& \varepsilon=0 \end{array}\right.. \label{9}
\end{equation}
\end{lem}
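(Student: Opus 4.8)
The plan is to reduce the $s$-dimensional integral to a single one-dimensional integral by means of the previous lemma, and then to evaluate that integral according to the sign of $\varepsilon$. Since on $\mathbf{R}_+^s$ one has $||x||_\gamma^\gamma=\sum_{i=1}^s x_i^\gamma$, the region of integration is $\{x\in\mathbf{R}_+^s:\sum_{i=1}^s x_i^\gamma\ge 1\}$ and the integrand equals $\Phi\!\left(\sum_{i=1}^s x_i^\gamma\right)$ with $\Phi(v)=v^{-(s+\varepsilon)/\gamma}$. First I would establish a ``full ball'' version of (\ref{6}): applying Lemma 1 on $D_M^s$ to the function $\Psi(u)=\Phi(M^\gamma u)$, so that $\Psi\!\left(\sum_{i=1}^s (x_i/M)^\gamma\right)=\Phi\!\left(\sum_{i=1}^s x_i^\gamma\right)$, and then substituting $v=M^\gamma u$ in the resulting one-dimensional integral, the factors $M^{\pm s}$ cancel and one obtains
\[ \int\cdots\int_{\{x\in\mathbf{R}_+^s:\sum_{i=1}^s x_i^\gamma\le M^\gamma\}}\Phi\Big(\sum_{i=1}^s x_i^\gamma\Big)\,dx_1\cdots dx_s=\frac{\Gamma^s(\frac{1}{\gamma})}{\gamma^s\Gamma(\frac{s}{\gamma})}\int_0^{M^\gamma}\Phi(v)\,v^{\frac{s}{\gamma}-1}\,dv \]
for every $M>0$.

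Next, to pass from a ball to the exterior region $\{\sum_{i=1}^s x_i^\gamma\ge 1\}$, I would take $M>1$ and subtract the $M=1$ instance of the above identity from the general one, so that the left-hand sides give the integral over the shell $\{1\le \sum_{i=1}^s x_i^\gamma\le M^\gamma\}$ while the right-hand sides combine into $\frac{\Gamma^s(1/\gamma)}{\gamma^s\Gamma(s/\gamma)}\int_1^{M^\gamma}\Phi(v)v^{s/\gamma-1}\,dv$. Because $\Phi\ge 0$, letting $M\to\infty$ and invoking the monotone convergence theorem yields
\[ \int\cdots\int_{\{x\in\mathbf{R}_+^s:\sum_{i=1}^s x_i^\gamma\ge 1\}}\Phi\Big(\sum_{i=1}^s x_i^\gamma\Big)\,dx_1\cdots dx_s=\frac{\Gamma^s(\frac{1}{\gamma})}{\gamma^s\Gamma(\frac{s}{\gamma})}\int_1^{\infty}\Phi(v)\,v^{\frac{s}{\gamma}-1}\,dv. \]

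Finally I would substitute $\Phi(v)=v^{-(s+\varepsilon)/\gamma}$, which reduces the right-hand integral to $\int_1^\infty v^{-\varepsilon/\gamma-1}\,dv$. For $\varepsilon>0$ this equals $\gamma/\varepsilon$, and multiplying by the constant prefactor gives exactly $\frac{\Gamma^s(1/\gamma)}{\varepsilon\gamma^{s-1}\Gamma(s/\gamma)}$; for $\varepsilon=0$ the integral is $\int_1^\infty v^{-1}\,dv=\infty$, which yields the second case of (\ref{9}). The routine computations—the cancellation of the powers of $M$ and the elementary one-dimensional integral—are straightforward; the only point requiring genuine care is the passage to the limit $M\to\infty$, where I must verify that the exterior region is exhausted by the shells and that monotone convergence applies, both of which follow at once from the nonnegativity of $\Phi$.
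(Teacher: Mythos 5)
Your overall strategy---reducing the $s$-dimensional integral to a one-dimensional one via Lemma~1 and then evaluating $\int_1^\infty v^{-\varepsilon/\gamma-1}\,dv$---is the natural one (the paper itself offers no proof of this lemma, citing \cite{YM}, but this reduction is exactly the computation the paper performs inside its proof of Lemma~3). However, your passage from balls to the exterior region contains a genuine gap. For your specific function $\Phi(v)=v^{-(s+\varepsilon)/\gamma}$ with $\varepsilon\geq 0$, \emph{both} sides of your ``full ball'' identity are $+\infty$ for every $M>0$: near the origin the integrand $||x||_{\gamma }^{-s-\varepsilon}$ contributes radially $\int_0 r^{-s-\varepsilon}r^{s-1}\,dr=\int_0 r^{-1-\varepsilon}\,dr=\infty$, and correspondingly $\int_0^{M^{\gamma}}\Phi(v)v^{\frac{s}{\gamma}-1}\,dv=\int_0^{M^{\gamma}}v^{-\varepsilon/\gamma-1}\,dv=\infty$. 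Subtracting the $M=1$ instance from the $M>1$ instance is therefore an $\infty-\infty$ operation, which is undefined; the shell identity does not follow this way. Note that this failure is not a borderline technicality: it occurs for \emph{every} $\varepsilon\geq 0$ in the range of the lemma, precisely because the kernel is never integrable at the origin.

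The repair is simple and preserves the rest of your argument verbatim: instead of subtracting, apply Lemma~1 to the \emph{truncated} function $\Psi(u):=\Phi(M^{\gamma}u)$ for $u\in(M^{-\gamma},1]$ and $\Psi(u):=0$ for $u\in(0,M^{-\gamma}]$. Since $\Psi\bigl(\sum_{i=1}^{s}(x_i/M)^{\gamma}\bigr)$ then equals $\Phi\bigl(\sum_{i=1}^{s}x_i^{\gamma}\bigr)$ on the shell $\{1<\sum_{i=1}^{s}x_i^{\gamma}\leq M^{\gamma}\}$ and vanishes elsewhere, Lemma~1 gives directly
\[
\int\cdots\int_{\{x\in\mathbf{R}_{+}^{s}:\,1<\sum_{i=1}^{s}x_i^{\gamma}\leq M^{\gamma}\}}\Phi\Bigl(\sum_{i=1}^{s}x_i^{\gamma}\Bigr)dx_{1}\cdots dx_{s}
=\frac{\Gamma^{s}(\frac{1}{\gamma})}{\gamma^{s}\Gamma(\frac{s}{\gamma})}\int_{1}^{M^{\gamma}}\Phi(v)\,v^{\frac{s}{\gamma}-1}\,dv,
\]
with both sides finite; your monotone-convergence passage $M\to\infty$ and the final elementary integral then complete the proof exactly as you wrote them. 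This truncation device (taking $\Psi(u)=0$ on $(0,1/M^{\gamma})$ before invoking (\ref{6})) is precisely the one the paper uses in its proof of Lemma~3, so the corrected argument is fully consistent with the paper's own methodology.
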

\begin{defn}
 For $x=(x_{1},\cdots ,x_{m})\in \mathbf{R}%
_{+}^{m},y=(y_{1},\cdots ,y_{n})\in \mathbf{R}_{+}^{n}$, $\sigma
>1,$ we define two weight functions $\omega (\sigma ,y)$ and $\varpi (\sigma
,x)$, as follows%
\begin{equation}
\omega (\sigma ,y):=||y||_{\beta }^{-\sigma }\int_{\mathbf{R}%
_{+}^{m}}\left( \coth \frac{||x||_{\alpha }}{||y||_{\beta
}}-1\right) \frac{dx}{||x||_{\alpha }^{m-\sigma }}, \label{10}
\end{equation}%
\begin{equation}
\varpi (\sigma ,x):=||x||_{\alpha }^{\sigma }\int_{\mathbf{R}%
_{+}^{n}}\left( \coth \frac{||x||_{\alpha }}{||y||_{\beta
}}-1\right) \frac{dy}{||y||_{\beta }^{n+\sigma }},  \label{11}
\end{equation}%
where $\coth u=\frac{e^{u}+e^{-u}}{e^{u}-e^{-u}}$ is the hyperbolic
cotangent function (cf. \cite{ZHC}).
\end{defn}

By (\ref{6}), setting $v=\frac{Mu^{\frac{1}{\alpha
}}}{||y||_{\beta }},$ we
find%
\begin{eqnarray*}
\omega (\sigma ,y)&=&||y||_{\beta }^{-\sigma }\lim_{M\rightarrow
\infty }\int_{D_{M}^m}\left( \coth \frac{||x||_{\alpha
}}{||y||_{\beta
}}-1\right) \frac{dx}{||x||_{\alpha }^{m-\sigma }}\\
&=&||y||_{\beta }^{-\sigma }\lim_{M\rightarrow \infty
}\int_{D_{M}^m}%
\frac{\coth \frac{M}{||y||_{\beta }}[\sum_{i=1}^{m}(\frac{x_{i}}{M}%
)^{\alpha }]^{\frac{1}{\alpha }}-1}{M^{m-\sigma
}[\sum_{i=1}^{m}\left( \frac{x_{i}}{M}\right) ^{\alpha }]^{\frac{%
m-\sigma }{\alpha }}}dx \\
&=&||y||_{\beta }^{-\sigma }\lim_{M\rightarrow \infty
}\frac{M^{m}\Gamma
^{m}(\frac{1}{\alpha })}{\alpha ^{m}\Gamma (\frac{m}{\alpha })}%
\int_{0}^{1}\frac{\coth (\frac{M}{||y||_{\beta }})u^{\frac{1}{\alpha }}-1}{%
M^{m-\sigma }u^{\frac{m-\sigma }{\alpha }}}u^{\frac{m}{\alpha }%
-1}du \\
&=&||y||_{\beta }^{-\sigma }\lim_{M\rightarrow \infty }\frac{M^{\sigma
}\Gamma ^{m}(\frac{1}{\alpha })}{\alpha ^{m}\Gamma (\frac{m}{%
\alpha })}\int_{0}^{1}\left( \coth (\frac{M}{||y||_{\beta }})u^{\frac{1}{%
\alpha }}-1\right) u^{\frac{\sigma }{\alpha }-1}du \\
&=&\frac{\Gamma ^{m}(\frac{1}{\alpha })}{\alpha ^{m-1}\Gamma (\frac{%
m}{\alpha })}\int_{0}^{\infty }(\coth v-1)v^{\sigma -1}dv,
\end{eqnarray*}
and in view of the Lebesgue term by term theorem (cf. \cite{K2}), it follows
\begin{eqnarray}
\int_{0}^{\infty }(\coth v-1)v^{\sigma -1}dv 
&=\int_{0}^{\infty
}\left(\frac{e^{v}+e^{-v}}{e^{v}-e^{-v}}-1\right)v^{\sigma
-1}dv=\int_{0}^{\infty }\frac{2e^{-2v}v^{\sigma -1}}{1-e^{-2v}}dv \nonumber \\
&=2\int_{0}^{\infty }\sum_{k=1}^{\infty }e^{-2kv}v^{\sigma
-1}dv=2\sum_{k=1}^{\infty }\int_{0}^{\infty }e^{-2kv}v^{\sigma
-1}dv\nonumber \\
&=2\sum_{k=1}^{\infty }\frac{1}{(2k)^{\sigma }}\Gamma (\sigma
)=\frac{\Gamma (\sigma )}{2^{\sigma -1}}\zeta (\sigma ),
\label{12}
\end{eqnarray}
where $\zeta (\sigma )=\sum_{k=1}^{\infty }\frac{1}{k^{\sigma }},\ \sigma >1.$%
\begin{lem}
For $\sigma ,\widetilde{\sigma }>1,$ we have%
\begin{eqnarray}
\omega (\sigma ,y) &=&K_{2}(\sigma ):=\frac{\Gamma ^{m}(\frac{1}{\alpha }%
)}{\alpha ^{m-1}\Gamma (\frac{m}{\alpha })}\frac{\Gamma (\sigma )}{%
2^{\sigma -1}}\zeta (\sigma )(y\in \mathbf{R}_{+}^{n}),  \label{13} \\
\varpi (\sigma ,x) &=&K_{1}(\sigma ):=\frac{\Gamma ^{n}(\frac{1}{\beta })%
}{\alpha ^{n-1}\Gamma (\frac{n}{\beta })}\frac{\Gamma (\sigma )}{%
2^{\sigma -1}}\zeta (\sigma )(x\in \mathbf{R}_{+}^{m}), \label{14}
\end{eqnarray}%
\begin{eqnarray*}
w(\widetilde{\sigma },y) &:=&||y||_{\beta }^{-\widetilde{\sigma }%
}\int_{\{x\in \mathbf{R}_{+}^{m}:||x||_{\alpha }\geq 1\}}\left(
\coth \frac{||x||_{\alpha }}{||y||_{\beta }}-1\right)
\frac{dx}{||x||_{\alpha
}^{m-\widetilde{\sigma }}} \\
&=& K_{2}(\widetilde{\sigma })\left[ 1-\theta _{\widetilde{\sigma }%
}(||y||_{\beta })\right],
\end{eqnarray*}%
and
\begin{eqnarray}
\theta _{\widetilde{\sigma }}(||y||_{\beta })
&:=&\frac{2^{\widetilde{\sigma
}-1}}{\Gamma (\widetilde{\sigma })\zeta (\widetilde{\sigma })}%
\int_{0}^{||y||_{\beta }^{-1}}(\coth v-1)v^{\widetilde{\sigma
}-1}dv\nonumber\\
&=&O(||y||_{\beta }^{-\widetilde{\eta }})(\widetilde{\eta }>0;y\in
\mathbf{R}_{+}^{n}).  \label{15}
\end{eqnarray}
\end{lem}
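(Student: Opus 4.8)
The plan is to establish the four assertions in turn, taking advantage of the fact that the reduction of $\omega(\sigma,y)$ to the single integral $\int_{0}^{\infty}(\coth v-1)v^{\sigma-1}dv$ has already been carried out in the text immediately preceding the statement. For (\ref{13}) I would simply combine that reduction with the evaluation (\ref{12}): the geometric-series expansion $\frac{2e^{-2v}}{1-e^{-2v}}=2\sum_{k\geq 1}e^{-2kv}$ together with the Lebesgue term-by-term theorem gives $\int_{0}^{\infty}(\coth v-1)v^{\sigma-1}dv=\frac{\Gamma(\sigma)}{2^{\sigma-1}}\zeta(\sigma)$, and multiplying by the geometric constant $\frac{\Gamma^{m}(\frac{1}{\alpha})}{\alpha^{m-1}\Gamma(\frac{m}{\alpha})}$ yields $K_{2}(\sigma)$, manifestly independent of $y$.

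For (\ref{14}) I would repeat the $\omega$-computation with the inner and outer spaces exchanged: apply Lemma 1 (equation (\ref{6})) with $s=n$, $\gamma=\beta$, and $M\rightarrow\infty$, writing $||y||_{\beta}=Mu^{\frac{1}{\beta}}$ on $D_{M}^{n}$, and then perform the substitution that carries the $u$-integral over $(0,1]$ into a $v$-integral of the form $\int_{0}^{\infty}(\coth v-1)v^{\sigma-1}dv$. After cancelling the powers of $M$ and of $||x||_{\alpha}$, the constant factor $\frac{\Gamma^{n}(\frac{1}{\beta})}{\beta^{n-1}\Gamma(\frac{n}{\beta})}$ emerges alongside the same zeta-integral, giving $K_{1}(\sigma)$ independent of $x$ (I note in passing that the denominator of $K_{1}$ is naturally $\beta^{n-1}$ rather than $\alpha^{n-1}$). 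The justification for moving the limit $M\rightarrow\infty$ inside the integral is identical in both cases and rests on the positivity of $\coth u-1$ for $u>0$ together with monotone convergence.

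For the formula for $w(\widetilde{\sigma},y)$ the computation is word-for-word that of $\omega$, except that the domain is now $\{x\in\mathbf{R}_{+}^{m}:||x||_{\alpha}\geq 1\}$ rather than all of $\mathbf{R}_{+}^{m}$. In the variable $u=\sum_{i}(\frac{x_{i}}{M})^{\alpha}$ this constraint reads $u\geq M^{-\alpha}$, so after the substitution $v=\frac{M}{||y||_{\beta}}u^{\frac{1}{\alpha}}$ the lower limit of the resulting $v$-integral becomes $||y||_{\beta}^{-1}$ in place of $0$. Splitting $\int_{||y||_{\beta}^{-1}}^{\infty}=\int_{0}^{\infty}-\int_{0}^{||y||_{\beta}^{-1}}$ and factoring out $K_{2}(\widetilde{\sigma})$ then produces exactly $K_{2}(\widetilde{\sigma})[1-\theta_{\widetilde{\sigma}}(||y||_{\beta})]$ with $\theta_{\widetilde{\sigma}}$ as defined.

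The only genuinely new point is the decay estimate (\ref{15}), which I would obtain by controlling $\coth v-1$ near $v=0$. Writing $\coth v-1=\frac{2e^{-2v}}{1-e^{-2v}}$ and using $1-e^{-2v}\geq 2v\,e^{-2v}$ (a consequence of $e^{x}-1\geq x$) gives the clean bound $\coth v-1\leq\frac{1}{v}$ for $v>0$, so that $(\coth v-1)v^{\widetilde{\sigma}-1}\leq v^{\widetilde{\sigma}-2}$. Since $\widetilde{\sigma}>1$, integrating over $(0,||y||_{\beta}^{-1}]$ yields $\int_{0}^{||y||_{\beta}^{-1}}(\coth v-1)v^{\widetilde{\sigma}-1}dv\leq\frac{1}{\widetilde{\sigma}-1}||y||_{\beta}^{-(\widetilde{\sigma}-1)}$, whence $\theta_{\widetilde{\sigma}}(||y||_{\beta})=O(||y||_{\beta}^{-\widetilde{\eta}})$ with $\widetilde{\eta}=\widetilde{\sigma}-1>0$. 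The main technical care here is guaranteeing convergence of the integral at the origin, which is precisely what the inequality $\coth v-1\leq\frac{1}{v}$ secures, and which simultaneously fixes the explicit decay rate.
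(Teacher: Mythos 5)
Your proposal is correct, and its skeleton coincides with the paper's: reduce $\omega$, $\varpi$ and $w$ to the one-dimensional integral $\int (\coth v-1)v^{\sigma -1}dv$ via Lemma 1, evaluate that integral by (\ref{12}), and split $\int_{||y||_{\beta }^{-1}}^{\infty }=\int_{0}^{\infty }-\int_{0}^{||y||_{\beta }^{-1}}$ to obtain $K_{2}(\widetilde{\sigma })[1-\theta _{\widetilde{\sigma }}(||y||_{\beta })]$. The one place where you genuinely depart from the paper is the decay estimate (\ref{15}). The paper argues softly: it fixes $\gamma \in (1,\widetilde{\sigma })$, shows $(\coth v-1)v^{\gamma }\rightarrow 0$ as $v\rightarrow 0^{+}$ and as $v\rightarrow \infty $, infers the existence of an unspecified constant $L>0$ with $\coth v-1\leq Lv^{-\gamma }$, and thereby gets the rate $\widetilde{\eta }=\widetilde{\sigma }-\gamma $, which is necessarily strictly smaller than $\widetilde{\sigma }-1$. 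You instead use the explicit elementary inequality $\coth v-1=\frac{2e^{-2v}}{1-e^{-2v}}\leq \frac{1}{v}$ (equivalent to $e^{2v}-1\geq 2v$), which yields the endpoint rate $\widetilde{\eta }=\widetilde{\sigma }-1$ with the explicit constant $\frac{1}{\widetilde{\sigma }-1}$; this is both more elementary and slightly stronger than what the paper proves, and either version suffices for the lemma as stated, which only needs some $\widetilde{\eta }>0$. You are also right to flag the denominator in (\ref{14}): applying Lemma 1 in the $y$-variables (with $s=n$, $\gamma =\beta $) produces $\frac{\Gamma ^{n}(\frac{1}{\beta })}{\beta ^{n-1}\Gamma (\frac{n}{\beta })}$, so the $\alpha ^{n-1}$ appearing in the displayed definition of $K_{1}(\sigma )$ is a typo, one the paper never confronts since it disposes of (\ref{14}) with a bare ``similarly.''
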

\begin{proof}
 By (\ref{12}), we obtain (\ref{13}). Similarly, we get (%
\ref{14}). By (\ref{6}) for $\Psi(u)=0(u\in(0,1/M^\gamma))$, we find%
\begin{eqnarray*}
w(\widetilde{\sigma },y) &=&\frac{\Gamma ^{m}(\frac{1}{\alpha
})}{\alpha ^{m-1}\Gamma (\frac{m}{\alpha })}\int_{||y||_{\beta
}^{-1}}^{\infty
}(\coth v-1)v^{\widetilde{\sigma }-1}dv \\
&=&\frac{\Gamma ^{m}(\frac{1}{\alpha })}{\alpha ^{m-1}\Gamma (\frac{%
m}{\alpha })} \\
&&\times \left[ \int_{0}^{\infty }(\coth v-1)v^{\widetilde{\sigma }%
-1}dv-\int_{0}^{||y||_{\beta }^{-1}}(\coth v-1)v^{\widetilde{\sigma }-1}dv%
\right] \\
&=&\frac{\Gamma ^{m}(\frac{1}{\alpha })}{\alpha ^{m-1}\Gamma (\frac{%
m}{\alpha })}\frac{\Gamma (\widetilde{\sigma })}{2^{\widetilde{\sigma }%
-1}}\zeta (\widetilde{\sigma })\left[ 1-\theta _{\widetilde{\sigma }%
}(||y||_{\beta })\right] .
\end{eqnarray*}
Considering a constant $\gamma \in (1,\widetilde{\sigma }),$ we obtain%
\begin{eqnarray*}
\lim_{v\rightarrow 0^{+}}(\coth v-1)v^{\gamma } &=&\lim_{v\rightarrow 0^{+}}%
\frac{2v^{\gamma }}{e^{2v}-1}=\lim_{v\rightarrow 0^{+}}\frac{2\gamma
v^{\gamma -1}}{2e^{2v}}=0, \\
\lim_{v\rightarrow \infty }(\coth v-1)v^{\gamma } &=&0.
\end{eqnarray*}%
There exists a constant $L>0,$ such that $(\coth v-1)\leq Lv^{-\gamma }.$\\

Setting $\widetilde{\eta }:=\widetilde{\sigma }-\gamma (>0),$ it follows%
 $$
0\leq \theta _{\widetilde{\sigma }}(||y||_{\beta })\leq \frac{2^{\widetilde{%
\sigma }-1}L}{\Gamma (\widetilde{\sigma })\zeta (\widetilde{\sigma })}%
\int_{0}^{||y||_{\beta }^{-1}}v^{\widetilde{\eta }-1}dv=\frac{2^{\widetilde{%
\sigma }-1}L}{\Gamma (\widetilde{\sigma })\zeta (\widetilde{\sigma })%
\widetilde{\eta }}\frac{1}{||y||_{\beta }^{\widetilde{\eta }}},
 $$
and then $$\theta _{\widetilde{\sigma }}(||y||_{\beta })=O(
||y||_{\beta }^{-\widetilde{\eta }}) (y\in \mathbf{R}_{+}^{n}).$$
This completes the proof of the lemma.
\end{proof}
\begin{lem}
By the assumptions of Definition 1, if $p\in \mathbf{R}%
\backslash \{0,1\},\frac{1}{p}+\frac{1}{q}=1,$
$f(x)=f(x_{1},\cdots ,x_{m})\geq 0,$ $g(y)=g(y_{1},\cdots
,y_{n})\geq 0,$ then

(i) for $%
p>1,$ we have the following inequality:%
\begin{eqnarray}
J_{1}&:=&\left\{ \int_{\mathbf{R}_{+}^{n}}\frac{||y||_{\beta
}^{-p\sigma -n}}{[\omega (\sigma
,y)]^{p-1}}\left[\int_{\mathbf{R}_{+}^{m}} \left( \coth
\frac{||x||_{\alpha }}{||y||_{\beta }}-1\right) f(x)dx\right]
^{p}dy\right\} ^{\frac{1}{p}}\nonumber\\
&\leq& \left\{ \int_{\mathbf{R}_{+}^{m}}\varpi (\sigma
,x)||x||_{\alpha }^{p(m-\sigma )-m}f^{p}(x)dx\right\}
^{\frac{1}{p}},  \label{16}
\end{eqnarray}

(ii) for $0<p<1$ or $p<0,$ we obtain the reverses of (\ref{16}).
\end{lem}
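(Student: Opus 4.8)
The plan is to reduce (\ref{16}) to a double-integral bound by applying Hölder's inequality to the inner integral, then interchanging the order of integration and identifying the emerging one-dimensional integrals with the weight functions $\varpi$ and $\omega$. Throughout I abbreviate the (nonnegative, degree-$0$ homogeneous) kernel by $k(x,y):=\coth\frac{||x||_{\alpha}}{||y||_{\beta}}-1\ge 0$. The guiding idea is that the splitting exponent in Hölder must be chosen so that the factor peeled off is \emph{exactly} the integrand of $\omega(\sigma,y)$, after which the prescribed weight in $J_1$ cancels it cleanly.

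For part (i) with $p>1$, I would apply Hölder's inequality to the inner $x$-integral with the weight $||x||_{\alpha}^{(m-\sigma)/q}$ tailored to the homogeneity of $k$:
$$\int_{\mathbf{R}_{+}^{m}} k(x,y) f(x)\,dx = \int_{\mathbf{R}_{+}^{m}}\left[k(x,y)^{1/p}\,||x||_{\alpha}^{(m-\sigma)/q} f(x)\right]\left[k(x,y)^{1/q}\,||x||_{\alpha}^{-(m-\sigma)/q}\right]dx.$$
The second Hölder factor becomes $\left[\int_{\mathbf{R}_{+}^{m}} k(x,y)\,||x||_{\alpha}^{\sigma-m}\,dx\right]^{1/q}$, which by the \emph{definition} (\ref{10}) of $\omega$ (I need only this structural identity, not the closed form (\ref{13})) equals $||y||_{\beta}^{\sigma/q}[\omega(\sigma,y)]^{1/q}$. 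Raising the resulting estimate to the power $p$ and using $p/q=p-1$ gives
$$\left[\int_{\mathbf{R}_{+}^{m}} k(x,y) f(x)\,dx\right]^p \le ||y||_{\beta}^{p\sigma/q}[\omega(\sigma,y)]^{p-1}\int_{\mathbf{R}_{+}^{m}} k(x,y)\,||x||_{\alpha}^{p(m-\sigma)/q}f^{p}(x)\,dx.$$

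Next I would multiply by the prescribed weight $||y||_{\beta}^{-p\sigma-n}[\omega(\sigma,y)]^{-(p-1)}$ and integrate over $y\in\mathbf{R}_{+}^{n}$. The factor $[\omega(\sigma,y)]^{p-1}$ cancels identically, and the powers of $||y||_{\beta}$ collapse because $-p\sigma+p\sigma/q=-\sigma$, yielding
$$J_{1}^{p} \le \int_{\mathbf{R}_{+}^{n}}\int_{\mathbf{R}_{+}^{m}} ||y||_{\beta}^{-\sigma-n}\,k(x,y)\,||x||_{\alpha}^{p(m-\sigma)/q}f^{p}(x)\,dx\,dy.$$
Since the integrand is nonnegative, Tonelli's theorem permits interchanging the order of integration; evaluating the inner $y$-integral and comparing with definition (\ref{11}) of $\varpi$ gives $\int_{\mathbf{R}_{+}^{n}} k(x,y)\,||y||_{\beta}^{-n-\sigma}\,dy=||x||_{\alpha}^{-\sigma}\varpi(\sigma,x)$. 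A short exponent check, $p(m-\sigma)/q-\sigma=p(m-\sigma)-m$, then reproduces exactly the right-hand side of (\ref{16}), and taking $p$-th roots settles part (i).

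For part (ii), with $0<p<1$ (so $q<0$) or $p<0$ (so $0<q<1$), the \emph{only} change is that Hölder's inequality reverses into the corresponding reverse Hölder inequality, so every estimate above flips direction while the algebraic cancellations and the Tonelli interchange remain verbatim, producing the reverses of (\ref{16}). The substantive point requiring care is precisely the forced choice of the splitting exponent $(m-\sigma)/q$: it is dictated by the twin demands that the second Hölder factor reproduce the integrand of $\omega(\sigma,y)$ and that, after the $y$-integration, the inner integral collapse to $\varpi(\sigma,x)$ with the correct residual power of $||x||_{\alpha}$. Confirming this bookkeeping — the identities $-p\sigma+p\sigma/q=-\sigma$ and $p(m-\sigma)/q-\sigma=p(m-\sigma)-m$ — together with justifying the integral manipulations (and, in the reverse range, the applicability of reverse Hölder) is where the real content of the argument lies.
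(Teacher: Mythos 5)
Your proposal is correct and follows essentially the same route as the paper: Hölder's inequality with weight applied to the inner $x$-integral (your explicit split $k^{1/p}\cdot k^{1/q}$ with the factor $||x||_{\alpha}^{(m-\sigma)/q}$ is the paper's (17) up to cosmetic $||y||_{\beta}$-powers that cancel inside each bracket), followed by cancellation against the prescribed weight and a Fubini/Tonelli interchange identifying $\varpi(\sigma,x)$, with reverse Hölder handling part (ii) exactly as in the paper.
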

\begin{proof}
  (i) For $p>1,$ by H\"{o}lder's inequality with weight (cf.
\cite{K1}), it follows%
\begin{eqnarray}
&\int_{\mathbf{R}_{+}^{m}}\left( \coth \frac{||x||_{\alpha }}{%
||y||_{\beta }}-1\right) f(x)dx \nonumber\\
&=\int_{\mathbf{R}_{+}^{m}}\left( \coth \frac{||x||_{\alpha }}{%
||y||_{\beta }}-1\right) \left[ \frac{||x||_{\alpha }^{(m-\sigma )/q}}{%
||y||_{\beta }^{(n+\sigma )/p}}f(x)\right] \left[
\frac{||y||_{\beta }^{(n+\sigma )/p}}{||x||_{\alpha }^{(m-\sigma
)/q}}\right] dx \nonumber\\
&\leq \left\{ \int_{\mathbf{R}_{+}^{m}}\left( \coth
\frac{||x||_{\alpha
}}{||y||_{\beta }}-1\right) \frac{||x||_{\alpha }^{(m-\sigma )(p-1)}}{%
||y||_{\beta }^{n+\sigma }}f^{p}(x)dx\right\} ^{\frac{1}{p}} \nonumber\\
&\quad\times \left\{ \int_{\mathbf{R}_{+}^{m}}\left( \coth \frac{%
||x||_{\alpha }}{||y||_{\beta }}-1\right) \frac{||y||_{\beta
}^{(n+\sigma )(q-1)}}{||x||_{\alpha }^{m-\sigma }}dx\right\} ^{\frac{%
1}{q}} \nonumber\\
&=[\omega (\sigma ,y)]^{\frac{1}{q}}||y||_{\beta
}^{\frac{n}{p}+\sigma }\nonumber\\
&\quad \times \left\{ \int_{\mathbf{R}_{+}^{m}}\left( \coth
\frac{||x||_{\alpha
}}{||y||_{\beta }}-1\right) \frac{||x||_{\alpha }^{(m-\sigma )(p-1)}}{%
||y||_{\beta }^{n+\sigma }}f^{p}(x)dx\right\} ^{\frac{1}{p}}.
\label{17}
\end{eqnarray}%
Then by Fubini's theorem (cf. \cite{K2}), we have%
\begin{eqnarray}
J_{1} &\leq &\left\{ \int_{\mathbf{R}_{+}^{n}}\left[ \int_{\mathbf{R}%
_{+}^{m}}\left( \coth \frac{||x||_{\alpha }}{||y||_{\beta
}}-1\right)
\frac{||x||_{\alpha }^{(m-\sigma )(p-1)}}{||y||_{\beta }^{n+\sigma }}%
f^{p}(x)dx\right] dy\right\} ^{\frac{1}{p}}  \nonumber\\
&=&\left\{ \int_{\mathbf{R}_{+}^{m}}\left[ \int_{\mathbf{R}%
_{+}^{n}}\left( \coth \frac{||x||_{\alpha }}{||y||_{\beta
}}-1\right)
\frac{||x||_{\alpha }^{(m-\sigma )(p-1)}}{||y||_{\beta }^{n+\sigma }}%
dy\right] f^{p}(x)dx\right\} ^{\frac{1}{p}}\nonumber\\
&=&\left\{ \int_{\mathbf{R}_{+}^{m}}\varpi (\sigma
,x)||x||_{\alpha }^{p(m-\sigma )-m}f^{p}(x)dx\right\}
^{\frac{1}{p}}. \label{18}
\end{eqnarray}
Hence, (\ref{16}) follows.

(ii) For $0<p<1$ or $p<0,$ by the reverse H\"{o}lder inequality with
weight (cf. \cite{K1}), we obtain the reverse of (\ref{17}). Then by Fubini's
theorem, we can still obtain the reverse of (\ref{16}) and thus the lemma is proved.
\end{proof}
\begin{lem}
By the assumptions of Lemma 4,

(i) for $p>1,$ we have the following inequality equivalent to (\ref{16}):%
\begin{eqnarray}
I &:=&\int_{\mathbf{R}_{+}^{n}}\int_{\mathbf{R}_{+}^{m}}\left(
\coth
\frac{||x||_{\alpha }}{||y||_{\beta }}-1\right) f(x)g(y)dxdy\nonumber\\
&\leq &\left\{ \int_{\mathbf{R}_{+}^{m}}\varpi (\sigma
,x)||x||_{\alpha }^{p(m-\sigma )-m}f^{p}(x)dx\right\}
^{\frac{1}{p}}\nonumber\\
&&\times \left\{ \int_{\mathbf{R}_{+}^{n}}\omega (\sigma
,y)||y||_{\beta }^{q(n+\sigma )-n}g^{q}(y)dy\right\}
^{\frac{1}{q}}, \label{19}
\end{eqnarray}%

(ii) for $0<p<1$ or $p<0,$ we have the reverse of (\ref{19}) equivalent to
the reverses of (\ref{16}).
\end{lem}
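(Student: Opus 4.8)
The plan is to establish the equivalence between inequality (\ref{19}) and inequality (\ref{16}), which means proving two implications: first that (\ref{16}) implies (\ref{19}), and then conversely that (\ref{19}) implies (\ref{16}).

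For the forward direction, I would start from the double integral $I$ defining the left-hand side of (\ref{19}). The natural move is to insert a factor of the form $||y||_\beta^{\sigma+n/p}[\omega(\sigma,y)]^{(p-1)/p}$ together with its reciprocal, so as to prepare for an application of Hölder's inequality in the variable $y$. Concretely, I would write the inner $x$-integral against $g(y)$ and apply Hölder's inequality (for $p>1$) to separate $I$ into the product of two factors: one factor being exactly $J_1$ as defined in Lemma 4, and the other being $\left\{\int_{\mathbf{R}_+^n}\omega(\sigma,y)||y||_\beta^{q(n+\sigma)-n}g^q(y)\,dy\right\}^{1/q}$. Once $I\le J_1\cdot\{\cdots\}^{1/q}$ is in hand, I would invoke inequality (\ref{16}) from Lemma 4 to bound $J_1$ from above by $\left\{\int_{\mathbf{R}_+^m}\varpi(\sigma,x)||x||_\alpha^{p(m-\sigma)-m}f^p(x)\,dx\right\}^{1/p}$, yielding (\ref{19}) at once.

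For the reverse direction, I would argue that (\ref{19}) implies (\ref{16}). The standard device here is to choose the test function $g(y)$ cleverly so that (\ref{19}) collapses back onto (\ref{16}). Specifically, I would set
$$
g(y):=\frac{||y||_\beta^{-p\sigma-n}}{[\omega(\sigma,y)]^{p-1}}\left[\int_{\mathbf{R}_+^m}\left(\coth\frac{||x||_\alpha}{||y||_\beta}-1\right)f(x)\,dx\right]^{p-1},
$$
so that the weighted norm of $g$ in the second factor of (\ref{19}) reproduces $J_1^{p-1}$ and the left-hand side $I$ becomes $J_1^p$. Assuming $0<J_1<\infty$ (the degenerate cases are handled separately), dividing through by $J_1^{p-1}$ converts (\ref{19}) into (\ref{16}), establishing equivalence. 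For part (ii), with $0<p<1$ or $p<0$, the reverse Hölder inequality reverses the first step, and the same substitution argument, with inequalities reversed throughout, gives the reverse of (\ref{19}) equivalent to the reverse of (\ref{16}).

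I expect the main obstacle to be the careful bookkeeping of the exponents and the weight functions in the reverse direction, in particular verifying that the chosen $g$ indeed belongs to the relevant weighted space and that the substitution produces precisely the factor $J_1^{p-1}$ rather than something proportional to it. One must also justify the finiteness assumptions needed to divide by $J_1^{p-1}$: if $J_1=0$ or $J_1=\infty$ the equivalence holds trivially or must be excluded by the hypotheses $||f||_{p,\varphi},||g||_{q,\psi}>0$, so a short remark dispatching these edge cases is needed. The forward direction is essentially a single application of Hölder's inequality followed by Lemma 4 and should present no real difficulty.
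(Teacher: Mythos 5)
Your proposal is correct and follows essentially the same route as the paper: the forward direction inserts the factor $\left[\omega(\sigma,y)\right]^{1/q}\,||y||_{\beta }^{\sigma +n/p}$ (your $[\omega]^{(p-1)/p}$ is the same thing) and applies H\"{o}lder's inequality to get $I\leq J_{1}\left\{ \int_{\mathbf{R}_{+}^{n}}\omega (\sigma ,y)||y||_{\beta }^{q(n+\sigma )-n}g^{q}(y)dy\right\} ^{1/q}$ before invoking (\ref{16}), and the reverse direction uses exactly the paper's test function $g$, for which $I=J_{1}^{p}$ and the $q$-norm factor equals $J_{1}^{p-1}$. The only slight imprecision is your suggestion that $J_{1}=\infty$ could be ``excluded by the hypotheses'': Lemma 5 imposes no such norm hypotheses (they appear only in Theorem 1), and the paper instead disposes of this case by noting that (\ref{18}) forces the right-hand side of (\ref{16}) to be infinite as well, so (\ref{16}) holds as an equality.
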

\begin{proof}
  (i) For $p>1,$ by H\"{o}lder's inequality (cf. \cite{K1}), it
follows that%
\begin{eqnarray}
I &=&\int_{\mathbf{R}_{+}^{n}}\frac{||y||_{\beta }^{\frac{n}{q}%
-(n+\sigma )}}{[\omega (\sigma ,y)]^{\frac{1}{q}}}\left[ \int_{\mathbf{R}%
_{+}^{m}}\left( \coth \frac{||x||_{\alpha }}{||y||_{\beta
}}-1\right)
f(x)dx\right]\nonumber\\
&&\times \left[ \lbrack \omega (\sigma
,y)]^{\frac{1}{q}}||y||_{\beta }^{(n+\sigma
)-\frac{n}{q}}g(y)\right]dy\nonumber\\
&\leq& J_{1}\left\{ \int_{\mathbf{R}_{+}^{n}}\omega (\sigma
,y)||y||_{\beta }^{q(n+\sigma )-n}g^{q}(y)dy\right\}
^{\frac{1}{q}}. \label{20}
\end{eqnarray}%
Then by (\ref{16}), we obtain (\ref{19}).

On the other hand, assuming that (\ref{19}) is valid, we set%
 $$
g(y):=\frac{||y||_{\beta }^{-p\sigma -n}}{[\omega (\sigma ,y)]^{p-1}}%
\left( \int_{\mathbf{R}_{+}^{m}}\left( \coth \frac{||x||_{\alpha }}{%
||y||_{\beta }}-1\right) f(x)dx\right) ^{p-1},y\in
\mathbf{R}_{+}^{n}.
 $$
Then it follows that
 $$
J_{1}^{p}=\int_{\mathbf{R}_{+}^{n}}\omega (\sigma ,y)||y||_{\beta
}^{q(n+\sigma )-n}g^{q}(y)dy.
 $$
If $J_{1}=0,$ then (\ref{16}) is trivially valid; if $J_{1}=\infty ,$ then
by (\ref{18}), relation (\ref{16}) reduces to the form of an equality($=\infty )$. Suppose
that $0<J_{1}<\infty .$ By (\ref{19}), we have%
\begin{eqnarray*}
0 &<&\int_{\mathbf{R}_{+}^{n}}\omega (\sigma ,y)||y||_{\beta
}^{q(n+\sigma )-n}g^{q}(y)dy=J_{1}^{p}=I \\
&\leq &\left\{ \int_{\mathbf{R}_{+}^{m}}\varpi (\sigma
,x)||x||_{\alpha
}^{p(m-\sigma )-m}f^{p}(x)dx\right\} ^{\frac{1}{p}} \\
&&\times \left\{ \int_{\mathbf{R}_{+}^{n}}\omega (\sigma
,y)||y||_{\beta }^{q(n+\sigma )-n}g^{q}(y)dy\right\}
^{\frac{1}{q}}<\infty .
\end{eqnarray*}%
Therefore,%
\begin{eqnarray*}
J_{1} &=&\left\{ \int_{\mathbf{R}_{+}^{n}}\omega (\sigma
,y)||y||_{\beta
}^{q(n+\sigma )-n}g^{q}(y)dy\right\} ^{\frac{1}{p}} \\
&\leq &\left\{ \int_{\mathbf{R}_{+}^{m}}\varpi (\sigma
,x)||x||_{\alpha }^{p(m-\sigma )-m}f^{p}(x)dx\right\}
^{\frac{1}{p}},
\end{eqnarray*}%
and then (\ref{16}) follows. Hence, (\ref{16}) and (\ref{19}) are equivalent.

(ii) For $0<p<1$ or $p<0,$ similarly, we obtain the reverse of (\ref{19}%
) which is equivalent to the reverse of (\ref{16}) and thus the lemma is proved.
\end{proof}
\section{Main Results and Operator Expressions}

Let
$$\Phi (x):=||x||_{\alpha }^{p(m-\sigma )-m},\Psi (y):=||y||_{\beta
}^{q(n+\sigma )-n}(x\in \mathbf{R}_{+}^{m},y\in \mathbf{R}%
_{+}^{n}),
 $$
by Lemma 3, Lemma 4 and Lemma 5, we obtain

\begin{thm}
Suppose that $\alpha ,\beta >0,\ \sigma >1,p\in \mathbf{R%
}\backslash \{0,1\},\frac{1}{p}+\frac{1}{q}=1,$
$f(x)\\=f(x_{1},\cdots ,x_{m})\geq 0,\ g(y)=g(y_{1},\cdots
,y_{n})\geq 0,$
$$0<||f||_{p,\Phi }=\left\{\int_{\mathbf{R}_{+}^{m}}\Phi (x)f^{p}(x)dx\right\}^{%
\frac{1}{p}}<\infty ,$$
and
$$0<||g||_{q,\Psi }=\left\{\int_{\mathbf{R}_{+}^{n}}\Psi (y)g^{q}(y)dy\right\}^{%
\frac{1}{q}}<\infty.$$

(i) If $p>1,$ then we have the following equivalent inequalities with the
best possible constant factor $K(\sigma ),$ that is%
\begin{equation}
I=\int_{\mathbf{R}_{+}^{n}}\int_{\mathbf{R}_{+}^{m}}\left( \coth
\frac{||x||_{\alpha }}{||y||_{\beta }}-1\right)
f(x)g(y)dxdy<K(\sigma )||f||_{p,\Phi }||g||_{q,\Psi },  \label{21}
\end{equation}%
and
\begin{eqnarray}
J&:=&\left\{ \int_{\mathbf{R}_{+}^{n}}||y||_{\beta }^{-p\sigma
-n}\left( \int_{\mathbf{R}_{+}^{m}}\left( \coth \frac{||x||_{\alpha }%
}{||y||_{\beta }}-1\right) f(x)dx\right) ^{p}dy\right\} ^{\frac{1}{p}}\nonumber\\
&<&K(\sigma )||f||_{p,\Phi },  \label{22}
\end{eqnarray}%
where%
\begin{equation}
K(\sigma )=\left[ \frac{\Gamma ^{n}(\frac{1}{\beta })}{\beta ^{n-1}\Gamma (%
\frac{n}{\beta })}\right] ^{\frac{1}{p}}\left[ \frac{\Gamma ^{m}(%
\frac{1}{\alpha })}{\alpha ^{m-1}\Gamma (\frac{m}{\alpha
})}\right] ^{\frac{1}{q}}\frac{\Gamma (\sigma )}{2^{\sigma
-1}}\zeta (\sigma ). \label{23}
\end{equation}

(ii) If $0<p<1$ or $p<0,$ then we still have the equivalent reverses of (%
\ref{21}) and (\ref{22}) with the same best constant factor $K(\sigma ).$
\end{thm}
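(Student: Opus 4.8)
The plan is to read off the two inequalities from Lemmas 3--5 and then concentrate the real work on the sharpness of $K(\sigma)$. By Lemma 3 the two weight functions are in fact constants, $\varpi(\sigma,x)=K_1(\sigma)$ and $\omega(\sigma,y)=K_2(\sigma)$, so substituting them into the right-hand side of (\ref{19}) replaces it by $K_1(\sigma)^{1/p}K_2(\sigma)^{1/q}\|f\|_{p,\Phi}\|g\|_{q,\Psi}$; a one-line computation using $\frac1p+\frac1q=1$ gives $K_1(\sigma)^{1/p}K_2(\sigma)^{1/q}=K(\sigma)$, the constant in (\ref{23}). This produces the non-strict form of (\ref{21}). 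For (\ref{22}) I would note that, because $\omega(\sigma,y)\equiv K_2(\sigma)$, the quantity $J$ of (\ref{22}) equals $K_2(\sigma)^{1/q}J_1$ with $J_1$ as in (\ref{16}); feeding the constant weight into Lemma 4 and multiplying by $K_2(\sigma)^{1/q}$ yields $J\le K(\sigma)\|f\|_{p,\Phi}$. The equivalence of (\ref{21}) and (\ref{22}) is then inherited verbatim from the equivalence of (\ref{16}) and (\ref{19}) established in Lemma 5, the only new element being the harmless constant $K_2(\sigma)^{1/q}$ relating $J$ and $J_1$.

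To pass from ``$\le$'' to the strict ``$<$'' I would inspect the equality case of the weighted H\"older step (\ref{17}). Equality there forces, for almost every $y$, a proportionality between the two bracketed factors, which after simplification (using $p+q=pq$) amounts to $f(x)=c_0\|x\|_\alpha^{\sigma-m}$ almost everywhere. For such an $f$ one has $\|f\|_{p,\Phi}^p=c_0^p\int_{\mathbf{R}_+^m}\|x\|_\alpha^{-m}\,dx$, which diverges; this contradicts the hypothesis $0<\|f\|_{p,\Phi}<\infty$, so equality is impossible and the inequalities are strict.

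The heart of the proof is showing that $K(\sigma)$ cannot be lowered. I would argue by contradiction, assuming (\ref{21}) holds with some $k$ satisfying $0<k<K(\sigma)$. For small $\varepsilon>0$ set $\widetilde\sigma:=\sigma-\varepsilon/p>1$ and choose the test functions $\widetilde f(x):=\|x\|_\alpha^{\widetilde\sigma-m}$ on $\{\|x\|_\alpha\ge1\}$ and $\widetilde g(y):=\|y\|_\beta^{-\sigma-\varepsilon/q-n}$ on $\{\|y\|_\beta\ge1\}$, both zero elsewhere. Lemma 2 then gives $\|\widetilde f\|_{p,\Phi}=\left(\frac{\Gamma^m(1/\alpha)}{\varepsilon\alpha^{m-1}\Gamma(m/\alpha)}\right)^{1/p}$ and $\|\widetilde g\|_{q,\Psi}=\left(\frac{\Gamma^n(1/\beta)}{\varepsilon\beta^{n-1}\Gamma(n/\beta)}\right)^{1/q}$. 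Integrating first in $x$ and recognizing $w(\widetilde\sigma,y)=K_2(\widetilde\sigma)[1-\theta_{\widetilde\sigma}(\|y\|_\beta)]$ from Lemma 3, the bilinear form becomes $\widetilde I=K_2(\widetilde\sigma)\int_{\{\|y\|_\beta\ge1\}}\|y\|_\beta^{-n-\varepsilon}[1-\theta_{\widetilde\sigma}(\|y\|_\beta)]\,dy$. The leading piece is evaluated by Lemma 2, while the estimate $\theta_{\widetilde\sigma}(\|y\|_\beta)=O(\|y\|_\beta^{-\widetilde\eta})$ of (\ref{15}) makes the correction term bounded, hence an $O(1)$ contribution as $\varepsilon\to0^+$. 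Consequently $\varepsilon\widetilde I\to K_2(\sigma)\frac{\Gamma^n(1/\beta)}{\beta^{n-1}\Gamma(n/\beta)}$ by continuity of $K_2$ in its argument, whereas $\varepsilon\,k\,\|\widetilde f\|_{p,\Phi}\|\widetilde g\|_{q,\Psi}\to k\left(\frac{\Gamma^m(1/\alpha)}{\alpha^{m-1}\Gamma(m/\alpha)}\right)^{1/p}\left(\frac{\Gamma^n(1/\beta)}{\beta^{n-1}\Gamma(n/\beta)}\right)^{1/q}$. Multiplying the assumed inequality by $\varepsilon$ and letting $\varepsilon\to0^+$ rearranges, via $K_2(\sigma)=\frac{\Gamma^m(1/\alpha)}{\alpha^{m-1}\Gamma(m/\alpha)}\frac{\Gamma(\sigma)}{2^{\sigma-1}}\zeta(\sigma)$, to $K(\sigma)\le k$, contradicting $k<K(\sigma)$. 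Hence $K(\sigma)$ is best possible, and through the established equivalence the sharpness transfers to (\ref{22}).

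For part (ii), where $0<p<1$ or $p<0$, the same three steps run with the reverse weighted H\"older inequality (as already used in Lemmas 4 and 5), giving the reverses of (\ref{21}) and (\ref{22}); the sharpness argument is structurally identical, now showing $K(\sigma)$ cannot be replaced by any larger constant. I expect the sharpness step to be the main obstacle: one must ensure the correction term $\theta_{\widetilde\sigma}$ is controlled uniformly enough to vanish after multiplication by $\varepsilon$, and in the reverse range one must additionally track inequality directions and the finiteness of the auxiliary integrals so that the limit $\varepsilon\to0^+$ still yields the desired contradiction.
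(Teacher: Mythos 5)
Your proposal is correct and follows essentially the same route as the paper: it reads the two inequalities off Lemmas 3--5 (substituting the constant weights $K_1(\sigma),K_2(\sigma)$), obtains strictness from the H\"older equality case together with the divergence of $\int_{\mathbf{R}_{+}^{m}}||x||_{\alpha }^{-m}dx$ from (\ref{9}), and proves sharpness with exactly the paper's test functions $\widetilde{f},\widetilde{g}$ supported on $\{||x||_{\alpha }\geq 1\}$, $\{||y||_{\beta }\geq 1\}$, using $w(\widetilde{\sigma },y)$ and the $\theta _{\widetilde{\sigma }}$ estimate (\ref{15}) before multiplying by $\varepsilon $ and letting $\varepsilon \rightarrow 0^{+}$. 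The only cosmetic difference is that you derive (\ref{21}) directly from (\ref{19}) and (\ref{22}) from (\ref{16}), whereas the paper first proves (\ref{22}) and then obtains (\ref{21}) via (\ref{25}); both orderings rest on the same lemmas and the same equivalence argument.
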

\begin{proof}
  (i) For $p>1$, by the conditions, we can prove that (\ref{17})
becomes a strict inequality. Otherwise if (\ref{17}) takes the
form of equality, then there exist constants $A$ and $B$, which
are not all zero, such that for a.e. $y\in \mathbf{R}_{+}^{n},$
\begin{equation}
A\frac{||x||_{\alpha }^{(m-\sigma )(p-1)}}{||y||_{\beta }^{n+\sigma }%
}f^{p}(x)=B\frac{||y||_{\beta }^{(n+\sigma )(q-1)}}{||x||_{\alpha
}^{m-\sigma }}\,\, {\rm a.e.\,\, in\,\, }x\in \mathbf{R}_{+}^{m}.
\label{24}
\end{equation}%
If $A=0,$ then it follows that $B=0,$ which is impossible; if $A\neq 0,$ then (\ref{24}) reduces to
 $$
||x||_{\alpha }^{p(m-\sigma )-m}f^{p}(x)=\frac{B||y||_{\beta
}^{q(n+\sigma )}}{A||x||_{\alpha }^{m}}\,\,{\rm a.e.\,\, in \,\,}x\in \mathbf{R}%
_{+}^{m},
 $$
which contradicts $0<||f||_{p,\Phi }<\infty .$ In fact by (%
\ref{9}), it follows $$\int_{\mathbf{R}_{+}^{m}}||x||_{\alpha
}^{-m}dx=\infty .$$ Hence (\ref{16}) still assumes the form of
strict inequality. By Lemma 3 and Lemma 4, we deduce (\ref{22}).

Similarly to (\ref{20}), we still have%
\begin{equation}
I\leq J\left\{ \int_{\mathbf{R}_{+}^{n}}||y||_{\beta }^{q(n+\sigma
)-n}g^{q}(y)dy\right\} ^{\frac{1}{q}}. \label{25}
\end{equation}%
Then by (\ref{25}) and (\ref{22}), we obtain (\ref{21}). It is evident by
Lemma 5 and the assumptions, that the relations (\ref{21}) and (\ref{20}) are also
equivalent.

For $0<\varepsilon <p(\sigma -1),$ we define $\widetilde{f}(x),\widetilde{g}(y)$
as follows%
 $$
\widetilde{f}(x):=\left\{
\begin{array}{ll}
0,&0<||x||_{\alpha }<1, \\
||x||_{\alpha }^{\sigma -\frac{\varepsilon }{p}-m},&||x||_{\alpha }\geq 1,%
\end{array}%
\right.
 $$
 $$
\widetilde{g}(y):=\left\{
\begin{array}{ll}
0,&0<||y||_{\beta }<1, \\
||y||_{\beta }^{-\sigma -\frac{\varepsilon }{q}-n},&||y||_{\beta
}\geq
1.%
\end{array}%
\right.
 $$
Then for $\widetilde{\sigma }=\sigma -\frac{\varepsilon }{p},$ by (\ref{9}),
we derive
\begin{eqnarray*}
0 &\leq &\int_{\{y\in \mathbf{R}_{+}^{n}:||y||_{\beta }\geq
1\}}||y||_{\beta }^{-n-\varepsilon }O(||y||_{\beta }^{%
-\widetilde{\eta }})dy \\
&\leq &L^*\int_{\{y\in \mathbf{R}_{+}^{n}:||y||_{\beta }\geq
1\}}||y||_{\beta }^{-n-(\varepsilon +\widetilde{\eta })}dy \\
&=&L^*\frac{\Gamma ^{n}(\frac{1}{\beta })}{(\varepsilon +\widetilde{\eta }%
)\beta ^{n-1}\Gamma (\frac{n}{\beta })}<\infty ,
\end{eqnarray*}%
and in view of (\ref{9}) and (\ref{15}), it follows that%
\begin{eqnarray*}
&||\widetilde{f}||_{p,\Phi }||\widetilde{g}||_{q,\Psi } \\
&=\left\{ \int_{\{x\in \mathbf{R}_{+}^{m}:||x||_{\alpha }\geq
1\}}||x||_{\alpha }^{-m-\varepsilon }dx\right\}
^{\frac{1}{p}}\left\{ \int_{\{y\in \mathbf{R}_{+}^{n}:||y||_{\beta
}\geq 1\}}||y||_{\beta
}^{-n-\varepsilon }dy\right\} ^{\frac{1}{q}} \\
&=\frac{1}{\varepsilon }\left\{ \frac{\Gamma ^{m}(\frac{1}{\alpha })}{%
\alpha ^{m-1}\Gamma (\frac{m}{\alpha })}\right\} ^{\frac{1}{p}%
}\left\{ \frac{\Gamma ^{n}(\frac{1}{\beta })}{\beta ^{n-1}\Gamma (%
\frac{n}{\beta })}\right\} ^{\frac{1}{q}},
\end{eqnarray*}%
and
\begin{eqnarray*}
\widetilde{I} &:&=\int_{\mathbf{R}_{+}^{n}}\int_{\mathbf{R}%
_{+}^{m}}\left( \coth \frac{||x||_{\alpha }}{||y||_{\beta
}}-1\right)
\widetilde{f}(x)\widetilde{g}(y)dxdy \\
&=&\int_{\{y\in \mathbf{R}_{+}^{n}:||y||_{\beta }\geq
1\}}||y||_{\beta
}^{-n-\varepsilon }w(\widetilde{\sigma },y)dy \\
&=&K_{2}(\widetilde{\sigma })\int_{\{y\in \mathbf{R}_{+}^{n}:||y||_{%
\beta }\geq 1\}}||y||_{\beta }^{-n-\varepsilon }\left( 1-O(%
||y||_{\beta }^{-\widetilde{\eta }})\right) dy \\
&=&\frac{1}{\varepsilon }K_{2}(\widetilde{\sigma })\left[ \frac{\Gamma
^{n}(\frac{1}{\beta })}{\beta ^{n-1}\Gamma (\frac{n}{\beta })}%
-\varepsilon O_{\widetilde{\sigma }}(1)\right] .
\end{eqnarray*}

If there exists a constant $K\leq K(\sigma ),$ such that (\ref{21}) is valid
when replacing $K(\sigma )$ by $K,$ then we obtain%
\begin{eqnarray*}
&\frac{\Gamma ^{m}(\frac{1}{\alpha })}{\alpha ^{m-1}\Gamma (\frac{%
m}{\alpha })}\frac{\Gamma (\widetilde{\sigma })}{2^{\widetilde{\sigma }%
-1}}\zeta (\widetilde{\sigma })\left[ \frac{\Gamma ^{n}(\frac{1}{\beta })%
}{\beta ^{n-1}\Gamma (\frac{n}{\beta })}-\varepsilon O_{\widetilde{%
\sigma }}(1)\right] \\
&\leq \varepsilon \widetilde{I}<\varepsilon K||\widetilde{f}||_{p,\Phi }||%
\widetilde{a}||_{q,\Psi } \\
&=K\left\{ \frac{\Gamma ^{m}(\frac{1}{\alpha })}{\alpha
^{m-1}\Gamma (\frac{m}{\alpha })}\right\} ^{\frac{1}{p}}\left\{
\frac{\Gamma ^{n}(\frac{1}{\beta })}{\beta ^{n-1}\Gamma (\frac{n%
}{\beta })}\right\} ^{\frac{1}{q}},
\end{eqnarray*}%
and thus $K(\sigma )\leq K(\varepsilon \rightarrow 0^{+}).$ Hence $%
K=K(\sigma )$ is the best possible constant factor of (\ref{21}).

By the equivalency, we can prove that the constant factor $K(\sigma )$ in (%
\ref{22}) is the best possible. Otherwise, by
(\ref{25}) we would reach a contradiction to the fact that the constant factor $K(\sigma )$ in (\ref{21}) is the
best possible.

(ii) For $0<p<1$ or $p<0,$ similarly, we can still obtain the equivalent
reverses of (\ref{21}) and (\ref{22}) with the best constant factor. This completes the proof of the
theorem.
\end{proof}

\begin{cor}
Let the assumptions of Theorem 1 be fulfilled, and additionally,
$0<||f||_{1}:=\int_{\mathbf{R}_{+}^{m}}f(x)dx<\infty,$ and $0<||g||_{1}:=\int_{\mathbf{R}_{+}^{n}}g
(y)dy<\infty.$ Then,

(i) if $p>1,$ then we have the following equivalent inequalities with
the best possible constant factor $K(\sigma ),$ that is%
\begin{equation}
\int_{\mathbf{R}_{+}^{n}}\int_{\mathbf{R}_{+}^{m}}\coth \frac{%
||x||_{\alpha }}{||y||_{\beta }}f(x)g(y)dxdy<||f||_{1}||g||_{1}+K(\sigma
)||f||_{p,\Phi }||g||_{q,\Psi },  \label{26}
\end{equation}%
\begin{equation}
\left\{ \int_{\mathbf{R}_{+}^{n}}||y||_{\beta }^{-p\sigma
-n}\left(
\int_{\mathbf{R}_{+}^{m}}\coth \frac{||x||_{\alpha }}{||y||_{\beta }}%
f(x)dx-||f||_{1}\right) ^{p}dy\right\} ^{\frac{1}{p}}<K(\sigma
)||f||_{p,\Phi };  \label{27}
\end{equation}

(ii) if $0<p<1$ or $p<0,$ then we still have the equivalent reverses of (%
\ref{26}) and (\ref{27}) with the same best constant factor $K(\sigma ).$
\end{cor}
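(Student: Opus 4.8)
The plan is to reduce the corollary to Theorem 1 by means of the elementary identity $\coth u=(\coth u-1)+1$. Taking $u=||x||_{\alpha}/||y||_{\beta}$ and integrating against $f(x)g(y)$ over $\mathbf{R}_{+}^{m}\times\mathbf{R}_{+}^{n}$, the kernel $\coth\frac{||x||_{\alpha}}{||y||_{\beta}}$ splits off the Hilbert-type integral $I$ of Theorem 1 from the product of the $L^{1}$-masses. The extra hypotheses $0<||f||_{1},||g||_{1}<\infty$ make the added term finite, so by Fubini's theorem
$$\int_{\mathbf{R}_{+}^{n}}\int_{\mathbf{R}_{+}^{m}}\coth\frac{||x||_{\alpha}}{||y||_{\beta}}f(x)g(y)\,dx\,dy=I+||f||_{1}||g||_{1},$$
with both summands finite. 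For $p>1$, inequality (\ref{21}) gives $I<K(\sigma)||f||_{p,\Phi}||g||_{q,\Psi}$; adding $||f||_{1}||g||_{1}$ to both sides produces (\ref{26}) at once.

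For the equivalent form (\ref{27}) I would apply the same splitting to the inner integral only: for each fixed $y$,
$$\int_{\mathbf{R}_{+}^{m}}\coth\frac{||x||_{\alpha}}{||y||_{\beta}}f(x)\,dx-||f||_{1}=\int_{\mathbf{R}_{+}^{m}}\Bigl(\coth\frac{||x||_{\alpha}}{||y||_{\beta}}-1\Bigr)f(x)\,dx.$$
Hence the left-hand side of (\ref{27}) is precisely the quantity $J$ of (\ref{22}), so (\ref{27}) is literally (\ref{22}). Moreover, for any constant $K$ the inequality (\ref{26}) is \emph{equivalent} to (\ref{21}) with the same $K$, because the two $||f||_{1}||g||_{1}$ terms cancel; combined with the equivalence of (\ref{21}) and (\ref{22}) from Theorem 1, this gives the equivalence of (\ref{26}) and (\ref{27}).

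The only delicate point is that $K(\sigma)$ remains \textbf{best possible} under the additional $L^{1}$ constraint, since the extremal functions $\widetilde{f},\widetilde{g}$ of Theorem 1 need not be integrable. A direct check using (\ref{9}) shows that $\widetilde{g}$, with exponent $-\sigma-\frac{\varepsilon}{q}-n$, already lies in $L^{1}(\mathbf{R}_{+}^{n})$, whereas $\widetilde{f}$, with exponent $\sigma-\frac{\varepsilon}{p}-m>-m$, does not. I would therefore truncate, setting $\widetilde{f}_{R}:=\widetilde{f}$ on $\{||x||_{\alpha}\leq R\}$ and $0$ elsewhere, so that $\widetilde{f}_{R}$ is bounded with bounded support and hence satisfies $||\widetilde{f}_{R}||_{1}<\infty$ and $||\widetilde{f}_{R}||_{p,\Phi}<\infty$. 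By monotone convergence, as $R\to\infty$ the integral $I$ evaluated at $(\widetilde{f}_{R},\widetilde{g})$ increases to its value $\widetilde{I}$ at $(\widetilde{f},\widetilde{g})$, while $||\widetilde{f}_{R}||_{p,\Phi}\to||\widetilde{f}||_{p,\Phi}$. Since the proof of Theorem 1 shows $\widetilde{I}/(||\widetilde{f}||_{p,\Phi}||\widetilde{g}||_{q,\Psi})\to K(\sigma)$ as $\varepsilon\to0^{+}$, choosing first $\varepsilon$ small and then $R$ large makes this ratio for the admissible pair $(\widetilde{f}_{R},\widetilde{g})$ exceed any prescribed $K<K(\sigma)$. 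Because (\ref{26}) with constant $K$ collapses to $I<K||f||_{p,\Phi}||g||_{q,\Psi}$, such a $K$ is contradicted; thus $K(\sigma)$ is best possible in (\ref{26}) and, by the equivalence, in (\ref{27}).

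Part (ii), for $0<p<1$ or $p<0$, follows by applying the same decomposition $\coth=(\coth-1)+1$ to the reverse inequalities furnished by Theorem 1. I expect the main obstacle to be precisely this best-constant step: one must verify that passing to the $L^{1}$ class does not enlarge the optimal constant, and the cancellation of the $||f||_{1}||g||_{1}$ terms together with the truncation of the non-integrable extremizer $\widetilde{f}$ is exactly what makes the sharpness analysis of Theorem 1 transfer without loss.
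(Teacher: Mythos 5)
Your proposal is correct, and its core reduction is exactly the one the paper intends: Corollary 1 is stated with no proof at all, as an immediate consequence of Theorem 1 via the identity $\coth u=(\coth u-1)+1$, which (under the extra $L^{1}$ hypotheses, using Tonelli's theorem for nonnegative integrands) turns (\ref{21}) into (\ref{26}) and identifies the left-hand side of (\ref{27}) with the quantity $J$ of (\ref{22}). Where you go genuinely beyond the paper is the best-possibility step, and this is the valuable part of your write-up: you correctly observe that the trial function $\widetilde{f}$ from the proof of Theorem 1 has exponent $\sigma-\frac{\varepsilon}{p}-m$ with $\sigma-\frac{\varepsilon}{p}>1>0$, so by (\ref{9}) it is \emph{not} integrable, hence Theorem 1's extremal pair is inadmissible under the corollary's added constraints and the sharpness conclusion does not transfer for free. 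Your truncation $\widetilde{f}_{R}$ with monotone convergence repairs this over the smaller class, and sharpness then passes to (\ref{27}) through inequality (\ref{25}) exactly as in the paper's treatment of Theorem 1; the paper is silent on all of this. One small caution: your claim that (\ref{26}) is ``equivalent'' to (\ref{27}) via Theorem 1's equivalence of (\ref{21}) and (\ref{22}) should be read over the $L^{1}$-restricted class, and the direction (\ref{26})$\Rightarrow$(\ref{27}) is not automatic there, since Lemma 5's substitution defines a $g$ from $f$ that need not lie in $L^{1}(\mathbf{R}_{+}^{n})$; this does not affect the validity of either inequality or of the sharpness assertions, since you prove (\ref{27}) directly from (\ref{22}), but the word ``equivalent'' deserves the same caveat in your proof as it silently carries in the paper.
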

For $m=n=\alpha=\beta=1$ in Theorem 1 and Corollary 1, we obtain

\begin{cor}
 Suppose that $\sigma >1,p\in \mathbf{R}\backslash \{0,1\},\frac{%
1}{p}+\frac{1}{q}=1,$
 $$
\varphi (x):=x^{p(1-\sigma )-1},\psi (y):=y^{q(1+\sigma )-1}(x,y>0),
 $$
$f(x)\geq 0,$ $g(y)\geq 0,$ as well as $0<||f||_{p,\varphi
}<\infty,$ and $0<||g||_{q,\psi }<\infty.$ Then,

 (i) for $p>1,$ we have (\ref{5}) and the following equivalent
inequality with the best possible constant factor $\frac{\Gamma (\sigma )}{%
2^{\sigma -1}}\zeta (\sigma ),$ that is
\begin{equation}
\left\{ \int_{0}^{\infty }y^{-p\sigma -1}\left[ \int_{0}^{\infty }\left(
\coth\frac{x}{y}-1\right) f(x)dx\right] ^{p}dy\right\} ^{\frac{1}{p}}<%
\frac{\Gamma (\sigma )}{2^{\sigma -1}}\zeta (\sigma )||f||_{p,\varphi };
\label{28}
\end{equation}

(ii) for $0<p<1$ or $p<0,$ we obtain the equivalent reverses of (\ref{5}) and
(\ref{28}) with the same best constant factor.\textbf{\ }

Moreover, if
 $
0<||f||:=\int_{0}^{\infty }f(x)dx<\infty ,$ and
$0<||g||:=\int_{0}^{\infty }g(y)dy<\infty,$ then

(i) for $p>1,$ we have the following equivalent inequalities with the
best possible constant factor $\frac{\Gamma (\sigma )}{2^{\sigma -1}}\zeta
(\sigma ),$ that is%
\begin{equation}
\int_{0}^{\infty }\int_{0}^{\infty }\coth\frac{x}{y}f(x)g(y)dxdy<||f||%
\: ||g||+\frac{\Gamma (\sigma )}{2^{\sigma -1}}\zeta (\sigma
)||f||_{p,\varphi }||g||_{q,\psi },  \label{29}
\end{equation}%
\begin{equation}
\left\{ \int_{0}^{\infty }y^{-p\sigma -1}\left[ \int_{0}^{\infty }\coth
\frac{x}{y}f(x)dx-||f||\right] ^{p}dy\right\}
^{\frac{1}{p}}<\frac{\Gamma (\sigma )}{2^{\sigma -1}}\zeta (\sigma
)||f||_{p,\varphi },  \label{30}
\end{equation}

(ii) for $0<p<1$ or $p<0,$ we obtain the equivalent reverses of (\ref{29})
and (\ref{30}) with the same best constant factor.\textbf{\ }
\end{cor}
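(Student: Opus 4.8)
The plan is to obtain Corollary 2 as the direct specialization of Theorem 1 and Corollary 1 to the scalar case $m=n=\alpha=\beta=1$, so that no fresh analysis is needed; the work is purely to confirm that every object in the general statements collapses to the one-dimensional objects appearing here. First I would specialize the weight exponents. With $m=\alpha=1$ a point $x\in\mathbf{R}_{+}^{m}$ is a single positive real and $||x||_{\alpha}=x$, so $\Phi(x)=||x||_{\alpha}^{p(m-\sigma)-m}=x^{p(1-\sigma)-1}=\varphi(x)$; likewise with $n=\beta=1$ we get $\Psi(y)=||y||_{\beta}^{q(n+\sigma)-n}=y^{q(1+\sigma)-1}=\psi(y)$. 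Consequently $||f||_{p,\Phi}=||f||_{p,\varphi}$ and $||g||_{q,\Psi}=||g||_{q,\psi}$, while the kernel $\coth\frac{||x||_{\alpha}}{||y||_{\beta}}-1$ becomes $\coth\frac{x}{y}-1$.

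Next I would evaluate the constant factor $K(\sigma)$ of (\ref{23}). Since $\Gamma(1)=1$, both bracketed factors reduce to $\frac{\Gamma^{1}(1)}{1^{0}\,\Gamma(1)}=1$, so that
$$K(\sigma)=1^{1/p}\cdot 1^{1/q}\cdot\frac{\Gamma(\sigma)}{2^{\sigma-1}}\zeta(\sigma)=\frac{\Gamma(\sigma)}{2^{\sigma-1}}\zeta(\sigma).$$
Feeding these identifications into (\ref{21}) and (\ref{22}) yields (\ref{5}) and (\ref{28}); because Theorem 1 asserts that $K(\sigma)$ is best possible, the specialized value $\frac{\Gamma(\sigma)}{2^{\sigma-1}}\zeta(\sigma)$ is automatically best possible here, and the equivalence of the two inequalities together with the reverse statements for $0<p<1$ and $p<0$ is inherited verbatim.

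Finally, for the ``moreover'' part I would read off Corollary 1 under the same substitution. Here $||f||_{1}=\int_{\mathbf{R}_{+}^{m}}f(x)\,dx$ becomes $||f||=\int_{0}^{\infty}f(x)\,dx$ and similarly $||g||_{1}=||g||$, while $\coth\frac{||x||_{\alpha}}{||y||_{\beta}}$ becomes $\coth\frac{x}{y}$; thus (\ref{26}) and (\ref{27}) reduce exactly to (\ref{29}) and (\ref{30}), along with their reverses. The only point demanding care is the collapse of the two Gamma-factor brackets in $K(\sigma)$, which is the sole place where the multidimensional structure is visible, and once $\Gamma(1)=1$ is invoked the entire corollary follows mechanically.
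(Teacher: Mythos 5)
Your proposal is correct and is exactly the paper's own route: the paper introduces Corollary 2 with the single line ``For $m=n=\alpha=\beta=1$ in Theorem 1 and Corollary 1, we obtain,'' i.e.\ it too treats the corollary as a mechanical specialization, and your verification that $\Phi$ collapses to $\varphi$, $\Psi$ to $\psi$, and $K(\sigma)$ to $\frac{\Gamma(\sigma)}{2^{\sigma-1}}\zeta(\sigma)$ via $\Gamma(1)=1$ supplies precisely the (unwritten) details the paper leaves to the reader. Nothing further is needed.
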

By the assumptions of Theorem 1 for $p>1,$ in view of $J<K(\sigma
)||f||_{p,\Phi },$ we define:

\begin{defn}
A multidimensional Hilbert-type integral
operator%
\begin{equation}
T:{L}_{p,\Phi }(\mathbf{R}_{+}^{m})\rightarrow {L}_{p,\Psi
^{1-p}}(\mathbf{R}_{+}^{n})  \label{31}
\end{equation}%
is defined as follows:

For $f\in{L}_{p,\Phi }(\mathbf{R}_{+}^{m}),$ there
exists a unique representation $Tf\in{L}_{p,\Psi ^{1-p}}(\mathbf{R}%
_{+}^{n}),$ satisfying%
\begin{equation}
(Tf)(y):=\int_{\mathbf{R}_{+}^{m}}\left( \coth \frac{||x||_{\alpha }}{%
||y||_{\beta }}-1\right) f(x)dx(y\in \mathbf{R}_{+}^{n}).
\label{32}
\end{equation}%
For $g\in{L}_{q,\Psi }(\mathbf{R}_{+}^{n}),$ we define the
following formal inner product of $Tf$ and $g$ as follows:%
\begin{equation}
(Tf,g):=\int_{\mathbf{R}_{+}^{n}}\int_{\mathbf{R}_{+}^{m}}\left(
\coth \frac{||x||_{\alpha }}{||y||_{\beta }}-1\right)
f(x)g(y)dxdy. \label{33}
\end{equation}
\end{defn}

Then by Theorem 1 for $p>1,0<||f||_{p,\Phi },||g||_{q,\Psi }<\infty ,$ we
have the following equivalent inequalities:%
\begin{equation}
(Tf,g)<K(\sigma )||f||_{p,\Phi }||g||_{q,\Psi },  \label{34}
\end{equation}
and
\begin{equation}
||Tf||_{p,\Psi ^{1-p}}<K(\sigma )||f||_{p,\Phi }. \label{35}
\end{equation}
It follows that $T$ is bounded with
 $$
||T||:=\sup_{f(\neq \theta )\in{L}_{p,\Phi }(\mathbf{R}_{+}^{m})}%
\frac{||Tf||_{p,\Psi ^{1-p}}}{||f||_{p,\Phi }}\leq K(\sigma ).
 $$
Since the constant factor $K(\sigma )$ in (\ref{35}) is the best possible,
we obtain%
 \begin{equation}
||T||=K(\sigma )=\left[ \frac{\Gamma ^{n}(\frac{1}{\beta })}{\beta
^{n-1}\Gamma (\frac{n}{\beta })}\right] ^{\frac{1}{p}} \left[
\frac{\Gamma ^{m}(\frac{1}{\alpha })}{\alpha ^{m-1}\Gamma
(\frac{m}{\alpha })}\right] ^{\frac{1}{q}}\frac{\Gamma (\sigma
)}{2^{\sigma -1}}\zeta (\sigma ).  \label{36}
\end{equation}


\begin{acknowledgement}
The authors wish to express
their thanks to Professors Tserendorj Batbold, Mario Krnic and Jichang Kuang for their careful reading of the manuscript
 and for their valuable suggestions.\\
 \textit{M. Th. Rassias:} This work is supported by the Greek State Scholarship
Foundation (IKY). \textit{B. Yang:} This work is supported by 2012
Knowledge Construction Special Foundation Item of Guangdong
Institution of Higher Learning College and University (No.
2012KJCX0079).
\end{acknowledgement}

\end{document}